\newtheorem{thm}{Theorem}
\newtheorem{prop}[thm]{Proposition}
\newtheorem{lem}[thm]{Lemma}
\newtheorem{rem}[thm]{Remark}
\newcommand{\ldef}{:=}
\newcommand{\real}{\ensuremath{\mathbb{R}}}
\newcommand{\ints}{{\mathbb{Z}}}
\newcommand{\nat}{{\mathbb{N}}}
\newcommand{\natz}{{\mathbb{N}}_0}
\newcommand{\norm}[1]{\left\lVert #1 \right\rVert}
\DeclareMathOperator*{\argmin}{arg\,min}
\newcommand{\tth}{^\text{th}}
\newcommand{\param}{\textbf{a}}
\newcommand{\sys}{\mathscr{S}}
\newcommand{\thmtitle}[1]{\mbox{}\textit{(#1).}}
\newcommand{\remend}{\relax\ifmmode\else\unskip\hfill\fi\hbox{$\bullet$}}
\title{\LARGE \bf
	A Control Lyapunov Function Approach to Event-Triggered Parameterized Control for Discrete-Time Linear Systems
}
\author{Anusree Rajan, Kushagra Parmeshwar, and Pavankumar Tallapragada
  \thanks{Authors acknowledge the Centre for Networked Intelligence (a Cisco CSR initiative) at Indian Institute of Science for funding this work.
  	Anusree Rajan and Pavankumar Tallapragada are with the Department of Electrical 
  	Engineering, Indian Institute of Science, Bengaluru. Kushagra Parmeshwar is with the Department of Electrical 
  	Engineering, Indian Institute of Technology, Kharagpur.  {\tt\small anusreerajan@iisc.ac.in, kush8725@gmail.com, pavant@iisc.ac.in}}
}
\begin{document}
	
	\maketitle
	\thispagestyle{empty}
	\pagestyle{empty}

	%%%%%%%%%%%%%%%%%%%%%%%%%%%%%%%%%%%%%%%%%%%%%%%%%%%%%%%%%%%%%%%%%%%%%%%%%%%%%%%%
\begin{abstract}
This paper proposes an event-triggered parameterized control method using a control Lyapunov function approach for discrete time linear systems with external disturbances. In 
this control method, each control input to the plant is a linear combination of a fixed set of linearly independent scalar functions. The controller updates the coefficients of the parameterized 
control input in an event-triggered manner so as to 
minimize a quadratic cost function subject to quadratic constraints and communicates the same to the actuator. We design an 
event-triggering rule that guarantees global uniform ultimate 
boundedness of trajectories of the closed loop system and non-trivial inter-event times. We illustrate our 
results through numerical examples and we also compare the performance of the proposed control method with other existing control methods in the literature.
\end{abstract}

	%%%%%%%%%%%%%%%%%%%%%%%%%%%%%%%%%%%%%%%%%%%%%%%%%%%%%%%%%%%%%%%%%%%%%%%%%%%%%%%%
\section{INTRODUCTION}

Event-triggered control (ETC) is a promising control method, especially in networked control systems, due to its efficient utilization of resources compared to the classical time-triggered control method. Recent studies in the ETC literature try to explore the possibility of further improving the efficiency of resource utilization by designing control laws based on non-zero order hold (non-ZOH) techniques instead of the popular ZOH technique. However, most of the existing ETC methods based on non-ZOH control either require more computational capacity at the actuator or require transmitting a larger amount of information over the communication network at each communication time instant. An exception to this is the event-triggered parameterized control (ETPC) method proposed in~\cite{AR-PT:2023}. In this paper, we extend this idea using a control Lyapunov function (CLF) method for discrete-time linear systems with external disturbances. This is in contrast to the emulation based approach, which is far more common in event-triggered control literature.

\subsection{Literature Review}

A fundamental overview of the ETC method, along with relevant literature, is discussed in~\cite{ PT:2007, WH:2012, ML:2010, DT-SH:2017-book}. Generally, in ETC and in other closely
related approaches, such as self-triggered control~\cite{AA:2010} and periodic event-triggered control~\cite{WH:2013}, the control input to the plant is held constant between any two
consecutive triggering instants. However, there are some exceptions to this basic approach. For example, in model-based ETC~\cite{EG-PA:2013, MH-FD:2013, HZ-etal:2016, ZC-etal:2021,
  LZ-etal:2021}, a time-varying control input is applied to the plant even between two successive events by using a model of the plant at the actuator. In event/self-triggered model
predictive control (MPC)~\cite{HL-YS:2014, FD-MH-FA:2017, HL-etal:2018}, at each triggering instant, the controller generates a control trajectory generated by solving a finite horizon
optimization problem and then transmits it to the actuator, and the actuator applies the same to the plant until the next event. As discussed in~\cite{AL-JS:2023,KH-etal:2017}, the
efficiency of communication resource utilization in the model-predictive control method can be improved by transmitting only some of the samples of the generated control trajectory to
the actuator, based on which a sampled data first-order-hold (FOH) control input is applied to the plant. Another example of a non-ZOH-based ETC method is event-triggered dead-beat
control~\cite{BD-etal:2017}, where a sequence of control inputs is transmitted to the actuator in an event-triggered manner and the same is applied to the plant till the next packet is
received. 

Our recent work~\cite{AR-PT:2023} proposes a novel non-ZOH based ETC method, called as event-triggered parameterized control (ETPC) method, 
for stabilization of linear systems. In~\cite{AR-PT:2024}, we extend this control method to nonlinear control settings with external disturbances. In~\cite{AR-etal:2023}, we use a 
similar idea to design an event-triggered polynomial controller for 
trajectory tracking by unicycle robots. In all these works, we use an emulation based approach for determining the parameters at each event-triggering instant. There are also a few papers that use a parameterized control law in MPC like problems but not with even-triggering. For example, in our recent work~\cite{AR-AK-PT:2024}, we co-design a polynomial control law and a communication scheduling strategy for multi-loop networked control systems. Another example is~\cite{SD-eal:2023} which introduces a numerical algorithm that
serves as a preliminary step toward solving continuous-time
MPC problems directly without
explicit time-discretization.

\subsection{Contributions} \label{sec:contribs}

The contributions of this paper are given below:

\begin{itemize}
\item We design an event-triggered parameterized control law for discrete-time linear systems with external disturbances, using a control Lyapunov function approach. At each event, a
  parametrized control trajectory is generated by optimizing a quadratic cost in the state and control signals. This is in contrast to much of the literature on ETC, which employs an
  approach wherein a continuous feedback controller is emulated by ETC. For our proposed method, we guarantee global uniform ultimate boundedness of trajectories of the closed loop
  system and non-trivial inter-event times.
  
\item Compared to the model-based control method, the proposed parameterized control method 
requires less computational resources at the actuator and also 
provides greater privacy and security. 
\item Compared to the MPC-based control method, at each event, our proposed method requires only a limited 
number of parameters to be sent irrespective of the time duration of 
the signal. 
\item In this paper, we extend the control method proposed in our previous work~\cite{AR-PT:2023, AR-PT:2024, AR-etal:2023} to design an optimal control law for discrete-time linear
  systems with external disturbances. In~\cite{AR-PT:2023}, design of the proposed parameterized control law is based on a two stage process - generation of an ideal continuous time
  feedback control signal by simulating the system for some time duration and then optimally approximating the ideal feedback control signal. In this paper, directly obtain an optimal parametrized control signal by using the control Lyapunov function approach.

	\end{itemize}

%\subsection{Organization}
%
%Section~\ref{sec:problem_setup} presents the system dynamics and the 
%objective of
%this paper. In Section~\ref{sec:design}, we design a parameterized control law and 
%an event-triggering rule to achieve our objective. Then, in Section~\ref{sec:analysis}, we analyze 
%%the proposed controller and provide sufficient conditions for global uniform ultimate boundedness 
%%of trajectories of the closed loop system and non-trivial inter-event times.
%the performance of the proposed control system.
%Section~\ref{sec:numerical_examples} illustrates
%the results using numerical examples. Finally, we provide
%some concluding remarks in Section~\ref{sec:conclusion}.

\subsection{Notation}

Let $\real$ denote the set of all real numbers. Let $\ints$, $\nat$ and $\natz$ denote the set of all integers, positive and non-negative integers, respectively. For $a, b \in \real$, we
let $[a, b]_\ints := [a, b] \cap \ints$ and $[a, b)_\ints := [a, b) \cap \ints$. For any $x \in \real^n$, $\norm{x}$ denotes the euclidean norm. For a square matrix
$A \in \real^{n \times n}$ with real eigenvalues, let $\lambda_{\min}(A)$ and $\lambda_{\max}(A)$ denote the smallest and the largest eigenvalues of A, respectively. Further, for a symmetric matrix
$A \in \real^{n \times n}$, $A \succ 0$, $A \succeq 0$ and $A \prec 0$ mean that $A$ is positive definite, positive semi-definite and negative definite, respectively.

\section{PROBLEM SETUP}	\label{sec:problem_setup}

In this section, we present the system dynamics, the parameterized control law and the objective of this paper.

\subsection*{System Dynamics and Control Law}

Consider a discrete-time linear time-invariant system with external disturbance,
\begin{equation}\label{eq:sys}
x(t+1)=Ax(t) + Bu(t)+d(t), \quad \forall t \in \natz,
\end{equation}
where $x\in \real^n$, $u \in \real^m$, and $d \in \real^q$, respectively, denote the system state, the control input, and the 
external disturbance. 
\begin{enumerate}[resume, label=\textbf{(A\arabic*)},align=left]
	\item We assume that there exists $D\ge0$ such that $\norm{d(t)}\le D, \ \forall t \in \natz$. \label{A:d}
\end{enumerate}

In this paper, we consider a parameterized control law where each control input to the plant is a linear combination of a set of linearly independent scalar functions. The coefficients of the parameterized control input are updated in an event-triggered manner.

Specifically, we consider a set of functions
\begin{equation*}
\Phi := \left\{ \phi_j: [0, \infty)_{\ints} \to \real 
\right\}_{j=0}^p,
\end{equation*}
which satisfies the following standing assumption.

\begin{enumerate}[resume, label=\textbf{(A\arabic*)},align=left]
	\item $\Phi$ is a set 
	of linearly independent functions when restricted to $[0, N]_{\ints}$ where $N \in \nat$ is a fixed parameter, i.e.,
	%  \begin{equation*}
	$ \sum_{j=0}^{p}c_j\phi_j(t)=0,$ $\forall t \in [0, N]_{\ints}$ iff $c_j = 0,$ $\forall j \in \{0,1,\ldots,p\}$.
	%  \end{equation*}
	\label{A:phis}
\end{enumerate}

Then, we consider the following control law,
\begin{equation}\label{eq:control_law}
u(t_k + \tau) = \mathbb{P}(\tau)\param(k), \ \forall \tau \in [0, t_{k+1} - t_k)_\ints,
\end{equation}
where \begin{equation*}
\mathbb{P}(\tau) \ldef \begin{bmatrix}
\phi^{\top}(\tau) & 0 & \ldots & 0 \\
0 & \phi^{\top}(\tau) & \ldots & 0\\
\vdots & \vdots & \ldots & \vdots\\
0 & 0 & \ldots & \phi^{\top}(\tau)
\end{bmatrix} \in \real^{m \times m(p+1)} ,
\end{equation*} and
$\phi^\top(\tau) \ldef \begin{bmatrix}
\phi_0(\tau) & \phi_1(\tau)& \ldots & \phi_p(\tau)
\end{bmatrix}$. Here, $\param(k) \in \real^{m(p+1)}$ is a column vector which contains the coefficients of the parameterized control law. $(t_k)_{k \in \natz}$ denotes the sequence of time instants at which the controller computes the coefficients of the parameterized control law and communicates them to the actuator.

The general configuration of the event-triggered parameterized control system considered in this paper is depicted in Figure~\ref{fig:ETPC_system}.
\begin{figure}[h]
	\centering
	\includegraphics[width=7cm]{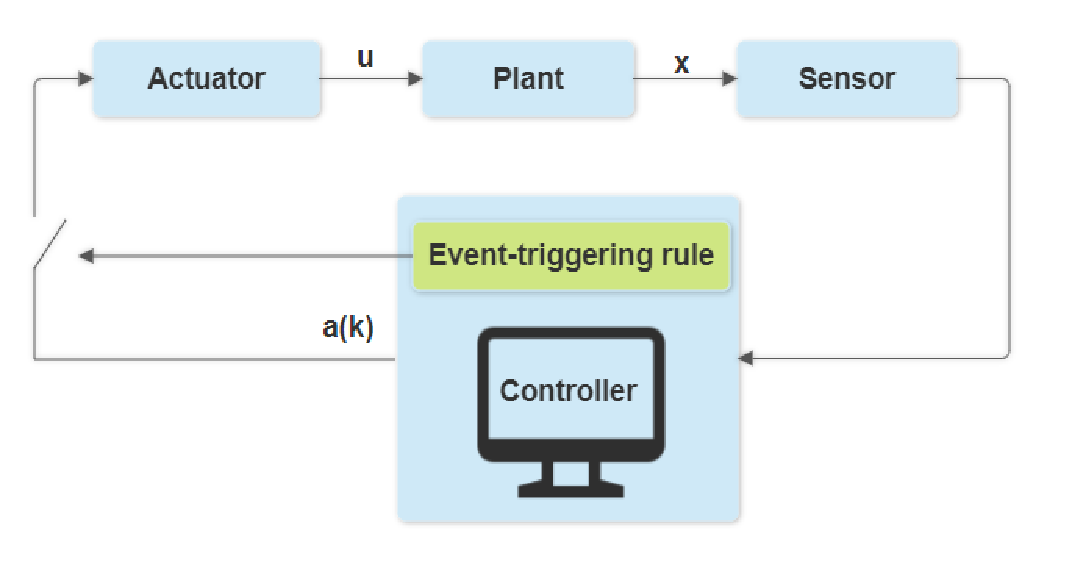}
	\caption{Event-triggered parameterized control configuration}
	\label{fig:ETPC_system}
\end{figure} 
Here, the system state is continuously available to the 
controller which has enough computational resources to evaluate the 
event-triggering condition and to update the coefficients of the 
control input at an event-triggering instant. 

\subsection*{Objective}

We seek to design a parameterized control law~\eqref{eq:control_law} using a control Lyapunov function approach, instead of the much more commonly used emulation based
approach, and an event-triggering rule for implicitly determining the communication instants $(t_k)_{k \in \natz}$ so that the trajectories of the closed loop system are globally
uniformly ultimately bounded.

\section{DESIGN OF EVENT-TRIGGERED CONTROLLER}\label{sec:design}
In this section, we design a parameterized control law and an 
event-triggering rule to achieve our objective.

 \subsection{Design of Parameterized Control Law}

At each triggering instant $t_k$, the controller determines the new 
coefficients $\param(k)$ by solving the following finite 
horizon optimization problem,
\begin{equation}\label{eq:a_k}
\begin{aligned}
\param(k)  \in & \argmin_{a \in \real^{m(p+1)}} \quad \sum_{t=t_k}^{t_k+N}
\left[V(\hat{x}(t))+u(t)^TRu(t)\right],\\
& \textrm{s.t.} \ \
\hat{x}(t+1) =  A\hat{x}(t)+Bu(t), \ \hat{x}(t_k)=x(t_k),\\ & u(t)=\mathbb{P}(t-t_k)a, \ \forall t \in [t_k,t_k + N]_\ints, 
\\ & V(\hat{x}(t)) \le \alpha^{t-t_k} V(\hat{x}(t_k)),
\ \forall t \in [t_k,t_k + M]_\ints.
\end{aligned}
\end{equation} 
%where $N \in \nat$, $R\succ0$ and $V(x) \ldef x^{\top}Px$ is a Lyapunov-like 
%function with $P\succ0$. 
Here, $M \le N \in \nat$ and $\alpha \in (0,1)$ are design parameters. $R \succeq 0$ and $V(x) \ldef x^{\top}Px$ is a Lyapunov-like function with $P \succ 0$.

Note that, given the dynamics, we can write the closed form expression for $\hat{x}(t)$ as follows,
\begin{equation*}
\hat{x}(t)=F(t-t_k)x(t_k)+G(t-t_k)a, \ \forall t \in [t_k,t_k+N]_{\ints},
\end{equation*}
where $F(\tau) \ldef A^{\tau}$ and $G(\tau) \ldef \sum_{j=0}^{\tau-1}A^{\tau-1-j}B\mathbb{P}(j)$. By using the above expression, we can rewrite the optimization problem~\eqref{eq:a_k} as the following
quadratically constrained quadratic optimization problem,

\begin{equation}\label{eq:QCQP}
\begin{aligned}
\param(k)  \in & \argmin_{a \in \real^{m(p+1)}} \quad J(a),
\\
& \textrm{s.t.} \ \
H_{\tau}(a) < 0, \ \forall \tau \in [0,M]_{\ints},
%H(a) \le 0, 
\end{aligned}
\end{equation} 
where 
\begin{align*}
  J(a) = \ & a^{\top}\left[\sum_{\tau=0}^{N}\left(G^{\top}(\tau)PG(\tau)+\mathbb{P}^{\top}(\tau)R\mathbb{P}(\tau)\right)\right]a
  \\
  & + 2x^{\top}(t_k)\left[\sum_{\tau=0}^{N}F^{\top}(\tau)PG(\tau)\right]a
  \\
  & + x^{\top}(t_k)\left[\sum_{\tau=0}^{N}F^{\top}(\tau)PF(\tau)\right]x(t_k),
\end{align*}
\begin{align*}
H_{\tau}(a)  = & \  a^{\top}G^{\top}(\tau)PG(\tau)a+2x^{\top}(t_k)F^{\top}(\tau)PG(\tau)a \\ & + x^{\top}(t_k)\left[F^{\top}(\tau)PF(\tau)-\alpha^{\tau}P\right]x(t_k).
\end{align*}

\begin{rem}\label{rem:convexity}
	The quadratically constrained quadratic optimization problem~\eqref{eq:QCQP} is a convex 
	optimization problem as $G^{\top}(\tau)PG(\tau) \succeq 0$ and $\mathbb{P}^{\top}(\tau)R\mathbb{P}(\tau) \succeq 0$ for all $\tau \in [0,N]_{\ints}$. \remend 
\end{rem}
\begin{rem}\label{rem:CLF-emulation}
  In the emulation based approach used in~\cite{AR-PT:2023,AR-PT:2024,AR-etal:2023}, we first have to simulate the system for some time duration in the future and then optimally
  approximate the ideal feedback control signal using a parametrized control. In contrast, in this paper, we use a control Lyapunov function approach for designing the parameterized
  control law. The major advantages of this approach are that it directly optimizes the control trajectory in the space of the parametrized functions and we can also incorporate a
  greater variety of cost functions and constraints in the problem. \remend
\end{rem}
\begin{rem}\label{rem:comparison-ETMPC}
  Compared to the event-triggered model predictive control and dead-beat control, the ETPC method allows for having inter-event times strictly greater than the prediction horizon length
  $N$. Once the coefficients $\param(k)$ are determined at $t_k$, the control law $u(t)$ is well defined for the interval $[t_k,t_{k+1})_{\ints}$ even if $t_{k+1}-t_k>N$. In addition,
  in ETPC, since only the parameters of the control signal need to be communicated, the communication load is significantly decreased even for long horizons $N$. \remend
\end{rem}

Next we provide a sufficient condition to ensure the feasibility of~\eqref{eq:QCQP}.

\begin{prop}\thmtitle{Sufficient condition to ensure the feasibility of~\eqref{eq:QCQP}}\label{prop:feasibility}
	The optimization problem~\eqref{eq:QCQP} is feasible if there exists a solution $C \in \real^{m(p+1) \times n}$ for the following linear matrix inequality (LMI), $ \forall \tau \in [0,M]_{\ints}$,
	\begin{equation}\label{eq:LMI}
L_0(\tau) + \sum_{i,j} c_{ij} L_{ij}(\tau) \succ 0,
	\end{equation}
	where $c_{ij}$ denotes the $\{i,j\}^{\tth}$ element of $C$, 
	\begin{equation*}
          L_0 (\tau) = \begin{bmatrix}
            \alpha^{\tau} P^{-1} & (F(\tau)P^{-1})^\top \\
            F(\tau)P^{-1} & P^{-1}
	\end{bmatrix},
	\end{equation*}
	and
\begin{equation*}
  L_{ij} = \begin{bmatrix}
    0 & (Q_{ij}(\tau))^\top \\
    Q_{ij}(\tau) & 0
\end{bmatrix} .
\end{equation*}
Here, \(Q_{ij}(\tau)\) is the matrix formed by multiplying the \(i{\tth}\) column of \(G(\tau)\) with the \(j{\tth}\) row of \(P^{-1}\).
\end{prop}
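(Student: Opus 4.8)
The plan is to produce an explicit feasible point of~\eqref{eq:QCQP} out of any solution $C$ of the LMI~\eqref{eq:LMI}. The key idea is to restrict attention to control parameters that are a \emph{linear state feedback} of the form $a = C x(t_k)$, and then to recognize~\eqref{eq:LMI} as the Schur-complement form of the Lyapunov-type decrease inequality that such a feedback must satisfy.

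First I would rewrite the constraint functions of~\eqref{eq:QCQP} in terms of $V$. Using $\hat{x}(t_k+\tau) = F(\tau)x(t_k) + G(\tau)a$ and $V(x)=x^\top P x$, a direct computation gives $H_\tau(a) = V\big(F(\tau)x(t_k)+G(\tau)a\big) - \alpha^\tau V(x(t_k))$. Substituting $a = C x(t_k)$ yields $H_\tau(Cx(t_k)) = x(t_k)^\top \Xi(\tau)\, x(t_k)$, where
\begin{equation*}
\Xi(\tau) := (F(\tau)+G(\tau)C)^\top P\,(F(\tau)+G(\tau)C) - \alpha^\tau P .
\end{equation*}
Hence, if $\Xi(\tau) \prec 0$ for every $\tau \in [0,M]_\ints$, then $\param(k) = C x(t_k)$ satisfies all constraints of~\eqref{eq:QCQP} (read as in~\eqref{eq:a_k}), \emph{regardless} of the value of $x(t_k)$, and so~\eqref{eq:QCQP} is feasible. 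Note this reduces a condition that a priori depends on $x(t_k)$ to a state-independent matrix inequality.

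Next I would show that~\eqref{eq:LMI} is precisely the statement $\Xi(\tau)\prec0$, after a Schur complement and a congruence. The one algebraic observation needed is that, writing a matrix product as a sum of outer products of columns with rows, the $\{2,1\}$ block of $\sum_{i,j} c_{ij} L_{ij}(\tau)$ is $\sum_{i,j} c_{ij} Q_{ij}(\tau) = G(\tau)\,C\,P^{-1}$. Together with the definition of $L_0(\tau)$ and $P^{-\top}=P^{-1}$, this gives
\begin{equation*}
L_0(\tau) + \sum_{i,j} c_{ij} L_{ij}(\tau) = \begin{bmatrix} \alpha^\tau P^{-1} & P^{-1}(F(\tau)+G(\tau)C)^\top \\ (F(\tau)+G(\tau)C)P^{-1} & P^{-1} \end{bmatrix}.
\end{equation*}
Since $P^{-1}\succ0$, the Schur complement lemma says this block matrix is positive definite iff $\alpha^\tau P^{-1} - P^{-1}(F(\tau)+G(\tau)C)^\top P (F(\tau)+G(\tau)C) P^{-1} \succ 0$; pre- and post-multiplying by $P=P^\top\succ0$ (a congruence, which preserves sign-definiteness) this is in turn equivalent to $\alpha^\tau P - (F(\tau)+G(\tau)C)^\top P (F(\tau)+G(\tau)C) \succ 0$, i.e.\ $\Xi(\tau)\prec0$. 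Combining with the previous paragraph, a solution $C$ of~\eqref{eq:LMI} supplies the feasible point $\param(k)=Cx(t_k)$, proving the proposition.

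I expect the only real difficulty to be bookkeeping: verifying the outer-product identity $\sum_{i,j} c_{ij} Q_{ij}(\tau) = G(\tau)CP^{-1}$ from the stated definition of $Q_{ij}$, and keeping the Schur-complement/congruence chain (and the transpose/symmetry of the off-diagonal blocks) straight. One minor point worth flagging separately is the boundary index $\tau=0$: there $F(0)=I$ and $G(0)=0$, so $H_0\equiv0$ and $\Xi(0)=0$, which is consistent with the (non-strict) decrease constraint of~\eqref{eq:a_k} and should simply be noted rather than folded into the strict inequality argument.
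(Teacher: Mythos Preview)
Your proposal is correct and follows essentially the same route as the paper: rewrite~\eqref{eq:LMI} as the single block matrix with off-diagonal block $(F(\tau)+G(\tau)C)P^{-1}$, apply the Schur complement (plus the congruence by $P$) to obtain $(F(\tau)+G(\tau)C)^\top P(F(\tau)+G(\tau)C)-\alpha^\tau P\prec 0$, and conclude that $a=Cx(t_k)$ is feasible. Your extra care with the outer-product identity $\sum_{i,j}c_{ij}Q_{ij}(\tau)=G(\tau)CP^{-1}$ and the $\tau=0$ edge case is well placed and only sharpens the argument.
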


\begin{proof}
	First, note that, we can rewrite the linear matrix inequality~\eqref{eq:LMI} in the following matrix form,
	\begin{equation*}
	\begin{bmatrix}
	\alpha^{\tau} P^{-1} & \left[\left(F(\tau) + G(\tau)C\right)P^{-1}\right]^\top \\
	\left(F(\tau) + G(\tau)C\right)P^{-1} & P^{-1}
	\end{bmatrix} \succ 0.
	\end{equation*}
	Now, by using Schur complement lemma, we can say that the above inequality is true if and only if,
	\begin{equation*}
          \alpha^{\tau} P^{-1} - \left[\left(F(\tau) + G(\tau)C\right)P^{-1}\right]^{\top}P\left[\left(F(\tau) + G(\tau)C\right)P^{-1}\right] \succ 0.
	\end{equation*} 
	This implies that,
		\begin{equation*}
	\left[F(\tau) + G(\tau)C\right]^{\top}P\left[F(\tau) + G(\tau)C\right] - \alpha^{\tau} P \prec 0.
	\end{equation*} 
	If there exists a $C \in \real^{m(p+1) \times n}$ which satisfies the above inequality for all $\tau \in [0,M]_{\ints}$, then we can say that $a=Cx(t_k)$ is a feasible solution of the optimization problem~\eqref{eq:QCQP}.
\end{proof}

\begin{rem}\label{rem:stabilizability}
  Assume that $\phi_0$ is a non-zero constant function and $\phi_j(0)=0, \ \forall j \in \{1,2,\ldots,p\}$. If the pair $(A,B)$ is controllable, then there always exists an $M \in \nat$ and
  $C \in \real^{m(p+1) \times n}$ that satisfy the LMI~\eqref{eq:LMI} for the choice of $P \succ 0$ which is a solution of the Lyapunov equation $(A+BK)^{\top}P(A+BK)-P=-Q$ and
  $\alpha \in [1-\frac{\lambda_{\min}(Q)}{\lambda_{\max}(P)},1),$ for some $Q \succ 0$ and $K \in \real^{m \times n}$ such that $A+BK$ is Schur stable and which satisfies the desired convergence rate
  constraint. Specifically $C \in \real^{m(p+1) \times n}$ such that $\mathbb{P}(0)C=K$ is guaranteed to satisfy the LMI~\eqref{eq:LMI} for $M=1$. \remend
\end{rem}

\subsection{Design of Event-Triggering Rule}
Next, we design an event-triggering rule that implicitly determines the time instants at which the controller updates the coefficients of the parameterized control input and communicates the same to the actuator. But first, we define the following \emph{predictor} function
%\begin{align*}
%  \bar{V}(t+1 | t_k) := & [\hat{x}(t+1 | t_k)+Ae(t | t_k)]^{\top}P[\hat{x}(t+1 | t_k)+Ae(t | t_k)] \\ & +\lambda_{\max}(P)(D^2+2D\norm{\hat{x}(t+1 | t_k)+Ae(t | t_k)})
%\end{align*}
%where $\hat{x}(t+1 | t_k)$ is the nominal state trajectory computed as
%\begin{equation*}
%  \hat{x}(t | t_k) := F(t-t_k)x(t_k)+G(t-t_k) \param(k), 
%\end{equation*}
%and $e(t | t_k) \ldef x(t)-\hat{x}(t | t_k)$. 
\begin{align*}
\bar{V}(t+1 | t_k) := & [Ax(t)+Bu(t |t_k)]^{\top}P[Ax(t)+Bu(t |t_k)] \\ & +\lambda_{\max}(P) \left(D^2+2D\norm{Ax(t)+Bu(t |t_k)} \right),
\end{align*}
where $u( t | t_k ) := \mathbb{P}( t - t_k) \param(k)$ is the control trajectory computed at $t_k$. As we will see in the next result, the
predictor function provides an upper bound on $V(x(t+1))$, over all possible disturbances, if at time $t$ the control input $u(t) = u(t | t_k)$. Thus, the predictor function could help us evaluate the necessity of replanning and updating the control trajectory at each timestep.
\begin{lem}\label{lem:V-bound}
  For all $t \in [t_k , t_{k+1}]_\ints$, and $\forall k \in \natz$, if $u(t) = u(t | t_k)$ then $V(x(t+1)) \leq \bar{V}(t+1 | t_k)$.
\end{lem}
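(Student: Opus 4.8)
The plan is to unwind the definition of $V$ along one step of the true dynamics~\eqref{eq:sys} and then bound the disturbance-dependent terms using assumption~\ref{A:d} together with standard eigenvalue estimates. Fix $k \in \natz$ and $t \in [t_k, t_{k+1}]_\ints$, and suppose $u(t) = u(t | t_k)$. Introduce the shorthand $y := Ax(t) + Bu(t | t_k)$, so that by~\eqref{eq:sys} we have $x(t+1) = y + d(t)$. Then $V(x(t+1)) = (y+d(t))^\top P (y+d(t)) = y^\top P y + 2 y^\top P d(t) + d(t)^\top P d(t)$.

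The next step is to control the two terms involving $d(t)$. For the quadratic term, since $P \succ 0$ and symmetric, $d(t)^\top P d(t) \le \lambda_{\max}(P) \norm{d(t)}^2 \le \lambda_{\max}(P) D^2$ by~\ref{A:d}. For the cross term, Cauchy--Schwarz gives $2 y^\top P d(t) \le 2 \norm{P y}\, \norm{d(t)} \le 2 \lambda_{\max}(P) \norm{y}\, \norm{d(t)} \le 2 \lambda_{\max}(P) D \norm{y}$, where we used $\norm{Py} \le \lambda_{\max}(P)\norm{y}$ (again because $P$ is symmetric positive definite). Substituting $\norm{y} = \norm{Ax(t) + Bu(t|t_k)}$ and collecting terms yields exactly
\begin{equation*}
V(x(t+1)) \le y^\top P y + \lambda_{\max}(P)\left(D^2 + 2D\norm{Ax(t)+Bu(t|t_k)}\right) = \bar{V}(t+1 | t_k),
\end{equation*}
which is the claim.

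There is essentially no hard step here: the result is a one-line worst-case over-approximation of $V(x(t+1))$, and the only things being invoked are the bound $\norm{d(t)} \le D$, Cauchy--Schwarz, and the norm bound $\norm{Pv}\le\lambda_{\max}(P)\norm{v}$ for symmetric $P \succ 0$. The only point worth a remark is that the hypothesis $u(t) = u(t|t_k)$ is precisely what makes $y = Ax(t)+Bu(t|t_k)$ the nominal (disturbance-free) successor state, so that the entire discrepancy between $V(x(t+1))$ and $y^\top P y$ is attributable to $d(t)$; the inclusion of the endpoint $t = t_{k+1}$ in the interval causes no issue since $u(\cdot | t_k)$ is defined on all of $[t_k, t_{k+1}]_\ints$ via~\eqref{eq:control_law}.
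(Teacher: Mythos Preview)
Your proof is correct and follows essentially the same route as the paper: expand $V(x(t+1))$ with $x(t+1)=y+d(t)$, then bound the quadratic and cross terms in $d(t)$ using $\norm{d(t)}\le D$ and the eigenvalue estimate $\norm{Pv}\le\lambda_{\max}(P)\norm{v}$. The only cosmetic difference is that you name Cauchy--Schwarz explicitly where the paper leaves the step implicit.
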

\begin{proof}
  For any $t \in [t_k , t_{k+1}]_\ints$, and $\forall k \in \natz$, if $u(t) = u(t | t_k)$, then we can say that
\begin{align*}
& V(x(t+1)) = x^{\top}(t+1)Px(t+1)
\\ & = [Ax(t)+Bu(t | t_k)+d(t)]^\top P [Ax(t)+Bu(t | t_k)+d(t)].
\end{align*}
Simplifying this, and by using Assumption~\ref{A:d}, we obtain
\begin{align*}
V(x(t+1)) &
= \ [Ax(t)+Bu(t | t_k)]^{\top}P[Ax(t)+Bu(t | t_k)] + \\ & d^{\top}(t)Pd(t)+2 d^{\top}(t)P[Ax(t)+Bu(t | t_k)]
\\ &
= \ [Ax(t)+Bu(t | t_k)]^{\top}P[Ax(t)+Bu(t | t_k)] + \\ & \lambda_{\max}(P)(\norm{d(t)}^2+2 \norm{d(t)}\norm{Ax(t)+Bu(t | t_k)})
\\ &
\le \bar{V}(t+1 | t_k).
\end{align*}
\end{proof}
Note that as per~\eqref{eq:control_law}, $u(t) = u(t | t_k)$, for all $t \in [t_k , t_{k+1})_\ints$, and $\forall k \in \natz$. However, in Lemma~\ref{lem:V-bound}, we consider the hypothetical
scenario $u(t) = u(t | t_k)$ for $t = t_{k+1}$. This is because at $t_{k+1}$, in order to first decide if a replanning of the control trajectory is required at $t_{k+1}$, we need to
evaluate the usefulness and the likely effect of the previously computed input, $u(t | t_k)$.

Now, we present the event-triggering rule below
\begin{align}
  t_{k+1} &= \min \left\lbrace t>t_k : \bar{V}(t+1 | t_k) > H( t, t_k) \right\rbrace, \label{eq:ETR}
  \\
  H(t , t_k) &:= \max \left\lbrace\epsilon^2, \beta^{t-t_k+1}V(x(t_k))\right\rbrace , \label{eq:H}
\end{align}
where $t_0=0$ and $\epsilon \ldef \frac{D}{\sigma}$. Here $\beta \in (0,1)$ and $\sigma >0$ are design parameters.

In summary, the closed loop system, $\sys$, is the combination 
of the system dynamics~\eqref{eq:sys}, the parameterized control 
law~\eqref{eq:control_law}, with coefficients chosen by 
solving~\eqref{eq:QCQP}, which are updated at the events determined by 
the event-triggering rule~\eqref{eq:ETR}. That is,
\begin{equation}\label{eq:full-system}
\sys : \ \eqref{eq:sys}, \eqref{eq:control_law}, 
\eqref{eq:QCQP}, \eqref{eq:ETR}.
\end{equation}

\section{ANALYSIS OF THE EVENT-TRIGGERED CONTROLLER}\label{sec:analysis}
In this section, we analyze the proposed event-triggered parameterized controller.  We first present a lemma that helps to prove the main result of this paper.
\begin{lem}\label{lem:V}
  Consider the closed loop system~\eqref{eq:full-system}. If $0 < \alpha < \beta < 1$ and $\sigma \le \bar{\sigma}$ where
  \begin{equation*}
    \bar{\sigma} \ldef \displaystyle \min_{\tau \in [1,M]_{\ints}} \left\lbrace\frac{ - \sqrt{ \frac{\alpha^{\tau}}{\lambda_{\min}(P)} } + \sqrt{ \frac{\alpha^{\tau}}{\lambda_{\min}(P)}+\frac{\beta^{\tau}-\alpha^{\tau}}{\lambda_{\max}(P)} } }{ 1+\norm{A}\bar{A}(\tau-1) } \right\rbrace ,
  \end{equation*}
  $\bar{A}(\tau) \ldef \norm{\sum_{j=0}^{\tau-1}A^j}$ with $\bar{A}(0)=0$, and $M \in \nat$ is same as in~\eqref{eq:a_k}, then the following statements are true.
  \begin{itemize}
  \item If $V(x(t_k)) \ge \epsilon^2$, for some $k \in \natz$, then $\bar{V}(t_k+\tau | t_k) \le \beta^{\tau} V(x(t_k))$, $\forall \tau \in [1,M]_{\ints}$.
  \item If $V(x(t_k)) \le \epsilon^2$, for some $k \in \natz$, then $\bar{V}(t_k+\tau | t_k) \le \epsilon^2$, $\forall \tau \in [1,M]_{\ints}$.
  \end{itemize}
\end{lem}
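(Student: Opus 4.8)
The plan is to control $\bar{V}(t_k+\tau | t_k)$ by comparing its one‑step‑predictor argument $z_\tau := Ax(t_k+\tau-1)+Bu(t_k+\tau-1 | t_k)$ with the optimizer's disturbance‑free trajectory $\hat{x}(\cdot)$ generated in~\eqref{eq:a_k}, and then invoking the convergence‑rate constraint of~\eqref{eq:a_k} together with Assumption~\ref{A:d}. Write $v := V(x(t_k))$ and, for a vector $y$, $\norm{y}_P := (y^\top P y)^{1/2}$. By the last constraint in~\eqref{eq:a_k}, $V(\hat{x}(t_k+\tau)) \le \alpha^{\tau} v$ for all $\tau \in [0,M]_{\ints}$, hence $\norm{\hat{x}(t_k+\tau)}_P \le \alpha^{\tau/2}\sqrt{v}$ and $\norm{\hat{x}(t_k+\tau)} \le \alpha^{\tau/2}\sqrt{v/\lambda_{\min}(P)}$. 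Setting $e(j) := x(t_k+j)-\hat{x}(t_k+j)$, the closed loop obeys $e(0)=0$ and $e(j+1)=Ae(j)+d(t_k+j)$ so long as the applied input on $[t_k,t_k+j]_{\ints}$ equals $u(\cdot | t_k)$; for $j \le \tau-1$ with $\tau \le M$ this is guaranteed by a forward induction on $\tau$, since once the claimed bounds hold for smaller indices the triggering rule~\eqref{eq:ETR}--\eqref{eq:H} is not met at $t_k,\dots,t_k+\tau-2$, so $t_{k+1}\ge t_k+\tau-1$. Therefore $\norm{e(\tau-1)} \le \bar{A}(\tau-1)D$ and $z_\tau = \hat{x}(t_k+\tau)+Ae(\tau-1)$ with $\norm{Ae(\tau-1)} \le \norm{A}\bar{A}(\tau-1)D$.

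Next I would expand $\bar{V}(t_k+\tau | t_k) = \norm{z_\tau}_P^2 + \lambda_{\max}(P)\big(D^2 + 2D\norm{z_\tau}\big)$. Using $\norm{z_\tau}_P \le \norm{\hat{x}(t_k+\tau)}_P + \sqrt{\lambda_{\max}(P)}\,\norm{Ae(\tau-1)}$, $\norm{z_\tau} \le \norm{\hat{x}(t_k+\tau)} + \norm{Ae(\tau-1)}$, and the estimates above, I obtain
\begin{align*}
\bar{V}(t_k+\tau | t_k) \le\ & \big(\alpha^{\tau/2}\sqrt{v} + \sqrt{\lambda_{\max}(P)}\,\norm{A}\bar{A}(\tau-1)D\big)^2 + \lambda_{\max}(P)D^2 \\
& + 2\lambda_{\max}(P)D\Big(\tfrac{\alpha^{\tau/2}\sqrt{v}}{\sqrt{\lambda_{\min}(P)}} + \norm{A}\bar{A}(\tau-1)D\Big).
\end{align*}
In the case $v\ge\epsilon^2$ one has $D\le\sigma\sqrt{v}$; substituting $D=\sigma\sqrt{v}$ in the right‑hand side (valid since every coefficient of $D$ there is nonnegative), factoring out $v$, and collecting powers of $\sigma$ with $C_\tau := 1+\norm{A}\bar{A}(\tau-1)$ gives
\[
\bar{V}(t_k+\tau | t_k) \le v\Big(\alpha^\tau + \lambda_{\max}(P)\sigma^2 C_\tau^2 + 2\alpha^{\tau/2}\sigma\big(\sqrt{\lambda_{\max}(P)}\,\norm{A}\bar{A}(\tau-1) + \tfrac{\lambda_{\max}(P)}{\sqrt{\lambda_{\min}(P)}}\big)\Big).
\]
Since $\lambda_{\min}(P)\le\lambda_{\max}(P)$ yields $\sqrt{\lambda_{\max}(P)} \le \lambda_{\max}(P)/\sqrt{\lambda_{\min}(P)}$, the last parenthesis is at most $\lambda_{\max}(P)C_\tau/\sqrt{\lambda_{\min}(P)}$, so
\[
\bar{V}(t_k+\tau | t_k) \le v\Big(\alpha^\tau + \lambda_{\max}(P)\sigma^2 C_\tau^2 + 2\lambda_{\max}(P)\sigma C_\tau\sqrt{\alpha^\tau/\lambda_{\min}(P)}\Big).
\]

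To finish, I would read $\bar{\sigma}$ as a quadratic‑formula threshold: for each $\tau\in[1,M]_{\ints}$, $\sigma\le\bar{\sigma}$ implies $\sigma C_\tau + \sqrt{\alpha^\tau/\lambda_{\min}(P)} \le \sqrt{\alpha^\tau/\lambda_{\min}(P) + (\beta^\tau-\alpha^\tau)/\lambda_{\max}(P)}$; both sides are positive because $\beta>\alpha$, so squaring and rearranging gives exactly $\alpha^\tau + \lambda_{\max}(P)\sigma^2 C_\tau^2 + 2\lambda_{\max}(P)\sigma C_\tau\sqrt{\alpha^\tau/\lambda_{\min}(P)} \le \beta^\tau$. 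Combined with the previous display this is $\bar{V}(t_k+\tau | t_k) \le \beta^\tau v$, the first claim. For the second claim, $v\le\epsilon^2$ forces $\sqrt{v}\le\epsilon$ and $D=\sigma\epsilon$; running the identical expansion with $\sqrt{v}$ replaced by $\epsilon$ and $D$ by $\sigma\epsilon$ gives $\bar{V}(t_k+\tau | t_k) \le \epsilon^2\big(\alpha^\tau + \lambda_{\max}(P)\sigma^2 C_\tau^2 + 2\lambda_{\max}(P)\sigma C_\tau\sqrt{\alpha^\tau/\lambda_{\min}(P)}\big) \le \beta^\tau\epsilon^2 \le \epsilon^2$.

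The main obstacle is the algebra in the middle step: the triangle inequalities must be applied in the right order — keeping the $P$‑weighted norm on $\hat{x}(t_k+\tau)$ and passing to the Euclidean norm only where the definition of $\bar{V}$ forces it — so that the terms collect into precisely the quadratic in $\sigma$ that the definition of $\bar{\sigma}$ is designed to bound by $\beta^\tau$; the one non‑routine inequality there is $\sqrt{\lambda_{\max}(P)}\le\lambda_{\max}(P)/\sqrt{\lambda_{\min}(P)}$, used to absorb the leftover Euclidean cross‑term. A secondary point requiring care is the justification of the error recursion, i.e.\ that the closed loop is indeed running $u(\cdot | t_k)$ over $[t_k,t_k+\tau-1]_{\ints}$, which is why the forward induction on $\tau$ through~\eqref{eq:ETR} is needed rather than a one‑shot estimate.
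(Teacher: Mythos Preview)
Your proof is correct and follows essentially the same route as the paper's: decompose $Ax(t)+Bu(t|t_k)$ as $\hat{x}(t_k+\tau)+Ae(\tau-1)$, invoke the constraint $V(\hat{x}(t_k+\tau))\le\alpha^\tau V(x(t_k))$ from~\eqref{eq:a_k} and the disturbance bound $\|e(\tau-1)\|\le\bar{A}(\tau-1)D$, collect into the same quadratic in $\sigma$ (the paper calls it $\gamma(\tau)$), and read $\bar{\sigma}$ as the quadratic-formula threshold guaranteeing $\gamma(\tau)\le\beta^\tau$. The only cosmetic differences are that the paper bounds the cross term $2\hat{x}^\top P A e$ directly by $2\lambda_{\max}(P)\|\hat{x}\|\|Ae\|$ (so it never needs your extra inequality $\sqrt{\lambda_{\max}(P)}\le\lambda_{\max}(P)/\sqrt{\lambda_{\min}(P)}$), and it treats the error recursion as a standing hypothesis ``if $u(t)=u(t|t_k)$'' rather than justifying it via your forward induction through~\eqref{eq:ETR}; your induction is a welcome clarification but not a different method.
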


\begin{proof}
  First, consider the function
  \begin{align*}
    \gamma( \tau ) := &\left(\alpha^{\tau}+ \lambda_{\max}(P) \left(1+\norm{A}\bar{A}(\tau-1)\right)^2\sigma^2\right) +
    \\ & 2\lambda_{\max}(P)\left(1+\norm{A}\bar{A}(\tau-1)\right)\sqrt{\frac{\alpha^\tau }{\lambda_{\min}(P)}}\sigma .
  \end{align*}
  Then $\sigma \le \bar{\sigma}$ and the definition of $\bar{\sigma}$ imply that
  \begin{equation}
    \label{eq:gamma-bound}
    \gamma( \tau ) \leq \beta^\tau , \quad \forall \tau \in [1,M]_{\ints} .
  \end{equation}
          
  Now note that, we can rewrite the definition of $\bar{V}(t+1 | t_k)$ as follows,
  \begin{align*}
    \bar{V}(t+1 | t_k) = & [\hat{x}(t+1 | t_k)+Ae(t | t_k)]^{\top}P[\hat{x}(t+1 | t_k)+Ae(t | t_k)] \\ & +\lambda_{\max}(P)(D^2+2D\norm{\hat{x}(t+1 | t_k)+Ae(t | t_k)})
  \end{align*}
  where $e(t | t_k) \ldef x(t)-\hat{x}(t | t_k)$ and $\hat{x}(t | t_k)$ is the nominal state trajectory which follows the dynamics
  \begin{equation*}
    \hat{x}(t+1 | t_k) =  A\hat{x}(t | t_k)+Bu(t | t_k), \ \hat{x}(t_k | t_k)=x(t_k).
  \end{equation*}
   
 Then, $\forall \tau \in [1,M]_{\ints}$,
  \begin{align*}
    &\bar{V}(t_k+\tau | t_k) =
    \\
    &V(\hat{x}(t_k+\tau | t_k)) + 2\hat{x}^{\top}(t_k+\tau | t_k)PAe(t_k+\tau-1 | t_k) +
    \\
    &e^{\top}(t_k+\tau-1 | t_k)A^{\top}PAe(t_k+\tau-1 | t_k) + \\ & \lambda_{\max}(P) (D^2+2D\norm{\hat{x}(t_k+\tau | t_k)+Ae(t_k+\tau-1 | t_k)}).
   \end{align*}
   Note that, for any $t \in [t_k , t_{k+1}]_\ints$, and $\forall k \in \natz$, if $u(t) = u(t | t_k)$, then $e(t+1 | t_k)= A e(t | t_k) + d(t)$ and hence $\norm{e(t_k+\tau | t_k)} \le \bar{A}(\tau)D$.  
  By using the fact that, for any $k \in \natz$ and $\forall \tau \in [0,M]_{\ints}$, $V(\hat{x}(t_k+\tau | t_k)) \le \alpha^{\tau}V(x(t_k))$ from the constraints in~\eqref{eq:a_k}, we can say that
	\begin{align*}
	\bar{V}(t_k&+\tau | t_k)  \le
	\alpha^{\tau} V(x(t_k)) + \lambda_{\max}(P) \left(\norm{A}\bar{A}(\tau-1)D\right)^2+\\&2\lambda_{\max}(P)\sqrt{\frac{\alpha^\tau V(x(t_k))}{\lambda_{\min}(P)}}\norm{A}\bar{A}(\tau-1)D \ +\\& \lambda_{\max}(P)\left[D^2+2D\left(\sqrt{\frac{\alpha^\tau V(x(t_k))}{\lambda_{\min}(P)}}+\norm{A}\bar{A}(\tau-1)D\right)\right].
	\end{align*}
	This implies that,
        \begin{align}
          \bar{V}(t_k+\tau | t_k)  \le & 
                                    \alpha^{\tau} V(x(t_k)) + \lambda_{\max}(P) D^2\left(1+\norm{A}\bar{A}(\tau-1)\right)^2+ \notag
          \\ & 2\lambda_{\max}(P)D\left(1+\norm{A}\bar{A}(\tau-1)\right)\sqrt{\frac{\alpha^\tau V(x(t_k))}{\lambda_{\min}(P)}}. \label{eq:Vbar-bound}
	\end{align}
  Note that, in the first statement of this lemma, as $D^2 \le \sigma^2 V(x(t_k))$ we can say from~\eqref{eq:gamma-bound} that
  \begin{equation*}
    \bar{V}(t_k+\tau | t_k) \le \gamma( \tau ) V(x(t_k))  \le \beta^\tau V(x(t_k)), \quad \forall \tau \in [1,M]_{\ints}.
  \end{equation*}
  This completes the proof of the first statement of this lemma.
  
  Next note that, in the second statement of this lemma, as $V(x(t_k)) \le \frac{D^2}{\sigma^2}$, we can say from~\eqref{eq:Vbar-bound} that
  \begin{equation*}
    \bar{V}(t_k+\tau | t_k) \le \gamma( \tau ) \frac{D^2}{\sigma^2} \le \ \beta^\tau \frac{D^2}{\sigma^2} \ \le \ \frac{D^2}{\sigma^2}, \quad \forall \tau \in [1,M]_{\ints}.
\end{equation*}
where the last inequality follows from the fact that $\beta^\tau < 1, \ \forall \tau \in [1,M]_{\ints}$. This completes the proof of the second statement of this lemma.
\end{proof}

Next, we present the main theorem of this paper. 
\begin{thm}\label{thm:GUUB}\thmtitle{Lower bound on inter-event times and global uniform ultimate boundedness of trajectories}
  Consider the closed loop system~\eqref{eq:full-system}. Let $M \ge 1$ in~\eqref{eq:a_k} and let the conditions of Lemma~\ref{lem:V} be satisfied. Then,
  \begin{itemize}
  \item The inter-event times $t_{k+1}-t_k \ge M$, $\forall k \in \natz$ and if $M \ge 2$ then the inter-event times are non-trivial, i.e., $t_{k+1}-t_k > 1$, $\forall k \in \natz$.
  \item If $V(x(t_k)) \le \epsilon^2$ for some $k \in \natz$, then $V(x(t)) \le \epsilon^2$, $\forall t \in [t_k,\infty)_{\ints}$. 
  \item If $V(x(t_0)) > \epsilon^2$, then there exists a $k \in \nat$ such that $V(x(t_k)) \le \epsilon^2$.
  %	Moreover $V(x(t_{k+1})) \le \beta V(x(t_k))$ for all $k \in [0,l-1]_{\ints}$. 
  \item The trajectories of the closed loop system~\eqref{eq:full-system} are globally uniformly ultimately bounded with $\epsilon^2$ being the ultimate bound on $V(x)$.
  \end{itemize}
\end{thm}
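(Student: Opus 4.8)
The plan is to establish the four items in sequence, using only Lemma~\ref{lem:V-bound} (the predictor upper bounds $V(x(t+1))$) and Lemma~\ref{lem:V} (the predictor stays below $\beta^\tau V(x(t_k))$, resp.\ $\epsilon^2$, on the short window $\tau\in[1,M]$), together with the definitions~\eqref{eq:ETR}--\eqref{eq:H} of the triggering rule; the elementary recursion $V(x(t_{k+1}))\le\max\{\epsilon^2,\beta^{M}V(x(t_k))\}$ will then do most of the work for the last three items. (Throughout I take the sequence $(t_k)$ to be infinite; if it terminates at some $t_k$, the predictor estimates below hold for all $t\ge t_k$ with the last computed trajectory and the conclusions follow at once.) For the \emph{inter-event bound}, fix $k$ and take any $t$ with $t_k<t<t_k+M$; setting $\tau:=t-t_k\in[1,M-1]_\ints$, the shifted index $t+1-t_k=\tau+1\in[2,M]_\ints$ lies in the window covered by Lemma~\ref{lem:V}. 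If $V(x(t_k))\ge\epsilon^2$, that lemma gives $\bar V(t+1\mid t_k)\le\beta^{\tau+1}V(x(t_k))\le H(t,t_k)$; if $V(x(t_k))\le\epsilon^2$ it gives $\bar V(t+1\mid t_k)\le\epsilon^2\le H(t,t_k)$ — in either case the $\max$ in~\eqref{eq:H} absorbs the bound, so no event fires before $t_k+M$, i.e.\ $t_{k+1}-t_k\ge M$, which is $>1$ when $M\ge2$.

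Next I would derive the \emph{per-interval bound and recursion}. For any $\tau\in[1,\,t_{k+1}-t_k]_\ints$, Lemma~\ref{lem:V-bound} applied at $t_k+\tau-1$ (where the true input still equals $u(\cdot\mid t_k)$) gives $V(x(t_k+\tau))\le\bar V(t_k+\tau\mid t_k)$; I bound the right-hand side by Lemma~\ref{lem:V} when $\tau=1$, and when $\tau\ge2$ by the fact that no event fires at $t_k+\tau-1<t_{k+1}$, so $\bar V(t_k+\tau\mid t_k)\le H(t_k+\tau-1,t_k)=\max\{\epsilon^2,\beta^{\tau}V(x(t_k))\}$. Since $\beta\in(0,1)$, all of these are $\le\max\{\epsilon^2,\beta V(x(t_k))\}$, which equals $\epsilon^2$ when $V(x(t_k))\le\epsilon^2$ and is $\le V(x(t_k))$ when $V(x(t_k))\ge\epsilon^2$. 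Specializing to $\tau=t_{k+1}-t_k\ge M$ gives the recursion $V(x(t_{k+1}))\le\max\{\epsilon^2,\beta^{M}V(x(t_k))\}$, while taking the maximum over $\tau$ gives $\max_{t\in[t_k,t_{k+1}]}V(x(t))\le\max\{V(x(t_k)),\epsilon^2\}$, sharpened to $V(x(t))\le\epsilon^2$ on all of $[t_k,t_{k+1}]$ whenever $V(x(t_k))\le\epsilon^2$.

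The last three items then follow. Bullet~2 is induction on $k$: $V(x(t_k))\le\epsilon^2$ forces $V(x(t))\le\epsilon^2$ on $[t_k,t_{k+1}]$, in particular at $t_{k+1}$, so the bound propagates to all $t\ge t_k$. Bullet~3 comes from unrolling the recursion to $V(x(t_k))\le\max\{\epsilon^2,\beta^{Mk}V(x(t_0))\}$: since $\beta^{M}<1$ and $\epsilon^2=D^2/\sigma^2>0$, the right side drops below (hence to) $\epsilon^2$ after a finite number $k^\ast=O\!\big(\log(V(x(t_0))/\epsilon^2)\big)$ of events. Bullet~4 combines these: chaining the per-interval bound over $k$ yields the uniform estimate $V(x(t))\le\max\{V(x(t_0)),\epsilon^2\}$ for all $t$ (uniform boundedness), while bullets~2--3 show every trajectory reaches the forward-invariant sublevel set $\{V\le\epsilon^2\}$ after finitely many events — a count controlled by $\|x(t_0)\|$ alone, the system being time-invariant — and stays there; this is precisely global uniform ultimate boundedness with ultimate bound $\epsilon^2$ on $V$.

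The one delicate point is the bookkeeping in the first two steps: Lemma~\ref{lem:V} controls the predictor only on the short horizon $\tau\in[1,M]$, whereas an inter-event interval may be far longer, so the Lemma~\ref{lem:V} estimates must be stitched to the threshold $H$ — which is designed exactly so the two regimes agree at $\tau=M$ — while carrying both cases $V(x(t_k))\gtrless\epsilon^2$ through every inequality (the $\max$ in~\eqref{eq:H} is what makes this bookkeeping painless). A minor secondary point, needed to make the word "uniform" in bullet~4 fully rigorous, is to observe that not just the event count $k^\ast$ but also the elapsed time $t_{k^\ast}$ is finite and bounded in terms of $\|x(t_0)\|$, which rests on the inter-event intervals being finite.
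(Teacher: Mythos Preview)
Your proposal is correct and follows essentially the same approach as the paper: Lemma~\ref{lem:V} on the short window $[1,M]$ gives the inter-event bound, and the combination of Lemma~\ref{lem:V-bound} with the non-triggering condition in~\eqref{eq:ETR} yields the per-interval estimate $V(x(t_k+\tau))\le\max\{\epsilon^2,\beta^{\tau}V(x(t_k))\}$ and the recursion $V(x(t_{k+1}))\le\max\{\epsilon^2,\beta^{M}V(x(t_k))\}$, from which bullets~2--4 follow. The only organizational difference is that you prove bullet~2 by forward induction on $k$ via the per-interval bound, whereas the paper argues by contradiction; your remark on ``uniform'' is slightly off (telescoping your per-interval bound already gives $V(x(t))\le\max\{\epsilon^2,\beta^{t-t_0}V(x(t_0))\}$, so no finiteness of inter-event times is needed), but the paper is no more careful on this point.
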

\begin{proof}
  Let us prove the first statement of this theorem. Note that, according to the event-triggering rule~\eqref{eq:ETR}, an event is triggered at $t>t_k$ if and only if
  $\bar{V}(t+1 | t_k) > \epsilon^2$ and $\bar{V}(t+1 | t_k) > \beta^{t-t_k+1}V(x(t_k))$. Lemma~\ref{lem:V} shows that, for any $t \in [t_k,t_k+M-1]_{\ints}$, at least one of the two conditions given
  above is not satisfied. This implies that $t_{k+1}-t_k \ge M$ for $\forall k \in \natz$. This completes the proof of the first statement.

  Now, let us prove the second statement by contradiction. Let there exist $\bar{t} \in [t_k+1,\infty)_{\ints}$ such that $V(x(\bar{t})) > \epsilon^2$ and $V(x(t)) \le \epsilon^2$ for all
  $t \in [t_k,\bar{t}-1]_{\ints}$. Let $t_q \ge t_k$ be such that $\bar{t} \in [t_q , t_{q+1}]$. Then, by Lemma~\ref{lem:V-bound}, $\bar{V}(\bar{t} | t_q) \ge V(x(\bar{t}))>\epsilon^2$. This implies,
  according to the event-triggering rule~\eqref{eq:ETR}, that an event must be triggered at $t = \bar{t} - 1$, i.e., $t_{q+1} = \bar{t}-1$, i.e.,
  $\bar{t} = t_{q+1} + 1 \notin [t_q, t_{q+1}]_\ints$. Similarly, by using the second statement of Lemma~\eqref{lem:V}, we can say that
  $V(x(\bar{t})) \le \bar{V}(\bar{t} | t_{q+1}) = \bar{V}(t_{q+1} + 1 | t_{q+1}) \le \epsilon^2$, which is a contradiction. Thus, there does not exist such a $\bar{t}$ and this completes the proof
  of the second statement.
	
  Next, we prove the third statement. As $M \ge 1$, according to the event-triggering rule~\eqref{eq:ETR} and Lemma~\ref{lem:V-bound} and Lemma~\ref{lem:V}, if $V(x(t_k)) > \epsilon^2$ for any
  $k \in \natz$, then $V(x(t_{k+1})) \le \max\{ \beta^M V(x(t_k)), \epsilon^2 \}$. Thus $\{V(x(t_k))\}$ is a monotonically decreasing sequence, with a uniform bound $\beta^M < 1$ on the rate of decrease,
  as long as $V(x(t_{k+1})) > \epsilon^2$. Hence, there must exists a $q \in \nat$ such that $V(x(t_q)) \le \epsilon^2$.
	
Now, by using the second and the third statements, we can say that for any initial state $x(t_0)$ there exists a $T \in \natz$ such that $V(x(t)) \le \epsilon^2$ for all $t \in [T,\infty)_{\ints}$. 
This completes the proof of this theorem.
\end{proof}

\section{NUMERICAL EXAMPLES}\label{sec:numerical_examples}
In this section, we present a numerical example to illustrate our results. \newline 
\textbf{Example 1:}
Consider the system,
\begin{equation*}
x(t+1)=\begin{bmatrix}
0.7 & -0.1 & -0.1 \\
0 &   0.8  & -0.4 \\
0  & 0 & 1.2
\end{bmatrix}x(t)+\begin{bmatrix}
0 \\ 0 \\ 1
\end{bmatrix}u(t)+d(t),
\end{equation*}
for all $t \in \natz$. In this example, we consider the control input as 
a linear combination of the set of functions $\{1,\tau, 
\tau^2,\ldots,\tau^p\}$. That is each control input to the plant is a polynomial of degree $p$. We consider the external disturbance $d(t)=\frac{0.01}{\sqrt{3}}\begin{bmatrix}
	\sin(50t) &  \sin(20t) &  \sin(10t)
	\end{bmatrix}^{\top}$ that satisfies Assumption~\ref{A:d} with $D=0.01$. We choose the quadratic Lyapunov function $V(x) \ldef x^{\top}Px$, 
where $P \succ 0$ is chosen such that it satisfies the Lyapunov 
equation $(A+BK)^{\top}P(A+BK)-P=-Q,$
with $Q=0.01\mathbb{I}$ where $\mathbb{I}$ is a $3 \times 3$ identity matrix and
$K= \begin{bmatrix}
0 & 0 & -0.3
\end{bmatrix}$.
According to Proposition~\ref{prop:feasibility} and Remark~\ref{rem:stabilizability}, we can verify that the optimization problem~\eqref{eq:QCQP} has a feasible solution for any $M \in [1,8]_{\ints}$. We choose the design parameters $M=2$, $R=1$, $\alpha=0.952, \ \beta=0.99$, and $\sigma=0.01$ which satisfy the conditions given in Lemma~\ref{lem:V}.

We compare the performance of the proposed CLF based ETPC method (ETPC-CLF) with the emulation based ETPC method (ETPC-emulation) proposed in our previous work~\cite{AR-PT:2023} and with the typical ZOH based event-triggered control method (ETC-ZOH). 
In the emulation based ETPC method, we consider the same parameterized control law~\eqref{eq:control_law} and the event-triggering rule~\eqref{eq:ETR}. However, at each triggering instant, the coefficients of the parameterized control law are updated by solving the following optimization problem.
\begin{equation}\label{eq:a_emu}
\begin{aligned}
\param(k)  \in & \argmin_{a \in \real^{m(p+1)}} \quad \sum_{t=t_k}^{t_k+N}
\norm{u(t)-K\hat{x}(t)}^2,\\
& \textrm{s.t.} \ \
\hat{x}(t+1) =  (A+BK)\hat{x}(t), \ \hat{x}(t_k)=x(t_k),\\ & u(t)=\mathbb{P}(t-t_k)a, \ \forall t \in [t_k,t_k + N]_\ints, 
\\ & \mathbb{P}(0)a=Kx(t_k).
\end{aligned}
\end{equation} 
In ETC-ZOH method, the control input to the plant is held constant between two successive communication time instants, i.e., $u(t)=u_k, \ \forall t \in [t_k,t_{k+1})_{\ints}$. We use the same event-triggering rule~\eqref{eq:ETR} to determine the sequence of communication time instants and at each communication time instant the control input to the plant is updated by solving the following optimization problem,
\begin{equation}\label{eq:u_ZOH}
\begin{aligned}
u_k \in & \argmin_{u \in \real^{m}} \quad \sum_{t=t_k}^{t_k+N}
\left[V(\hat{x}(t))+u^TRu\right],\\
& \textrm{s.t.} \ \
\hat{x}(t+1) =  A\hat{x}(t)+Bu, \ \forall t \in [t_k,t_k + N]_\ints, \\ & \ \hat{x}(t_k)=x(t_k),
\\ & V(\hat{x}(t)) \le \alpha^{t-t_k} V(\hat{x}(t_k)),
\ \forall t \in [t_k,t_k + M]_\ints.
\end{aligned}
\end{equation} 
\begin{figure}[h]
	\centering
	\begin{subfigure}{4cm}
		\includegraphics[width=4.4cm]{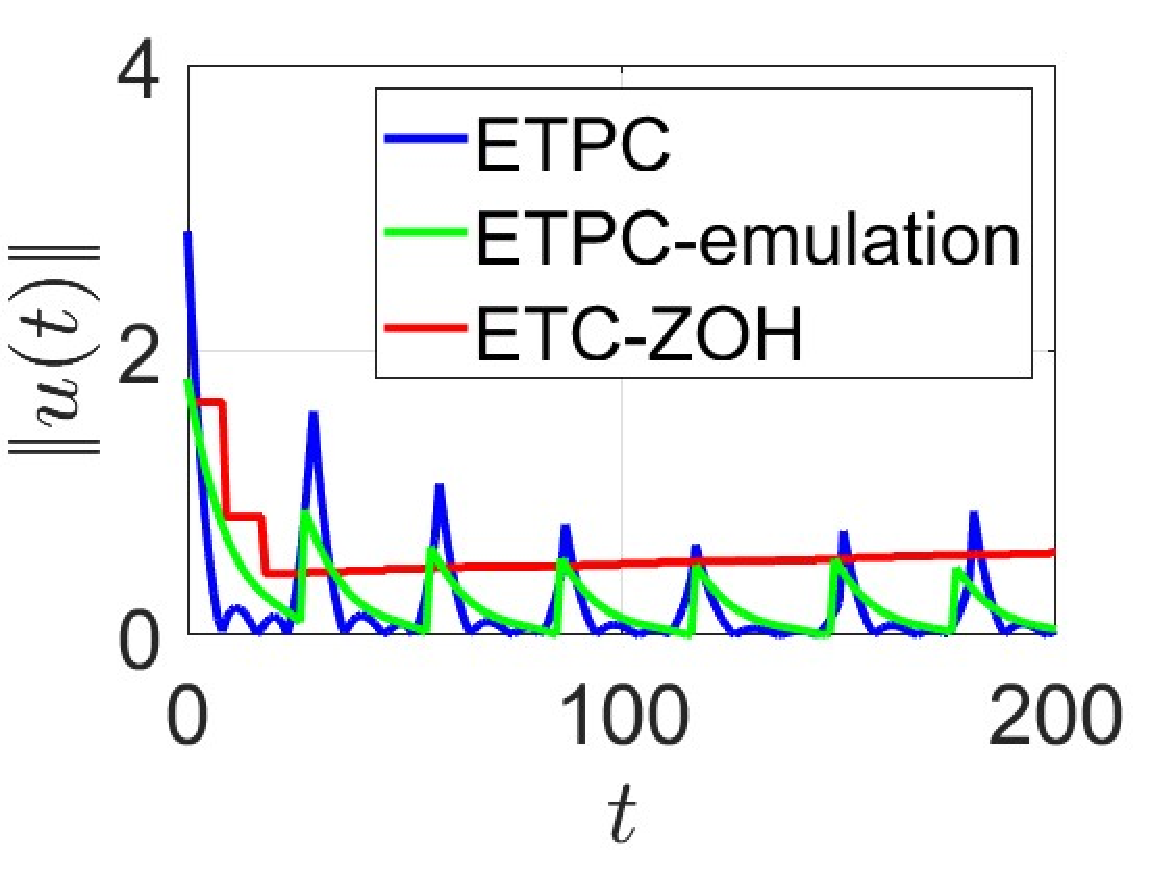}
		\caption{Control input}
		\label{fig:u}
	\end{subfigure}
	\quad
	\begin{subfigure}{4cm}
		\includegraphics[width=4.4cm]{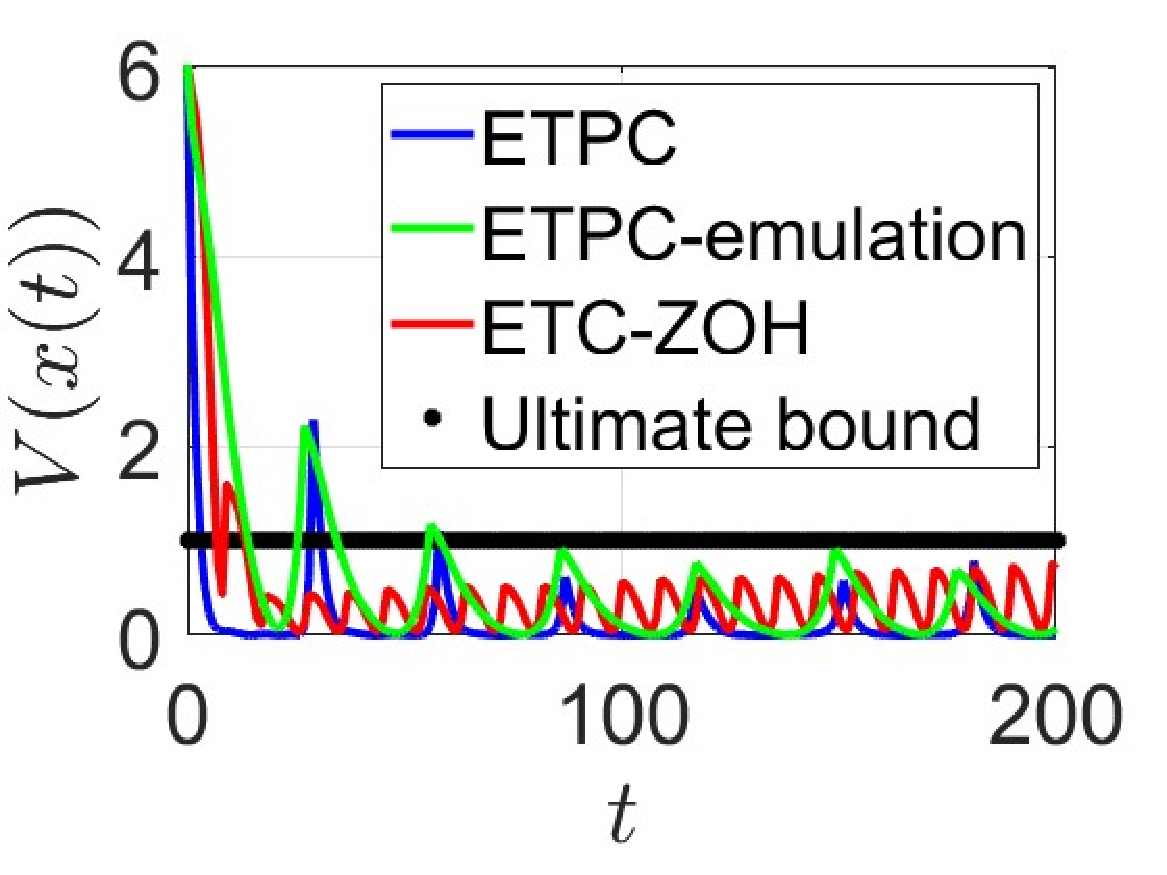}
		\caption{Convergence of $V(x)$}
		\label{fig:V}
	\end{subfigure}
	\caption{Simulation results of Example 1 for $p=3$, 
		$N=25$ and $x(0)=[2\quad5\quad6]^{\top}$.}
	\label{fig:Ex1}
\end{figure}

Figure~\ref{fig:Ex1} presents the simulation results with $p=3$, 
$N=25$ and $x(0)=[2\quad5\quad6]^{\top}$. Figure~\ref{fig:u} presents the evolution of norm of $u(t)$ and it shows that the proposed ETPC-CLF method offers smaller values for $\norm{u(t)}$ at most of the time instants compared to the other two methods. Figure~\ref{fig:V} presents the evolution of $V(x)$ along the system trajectory and it shows that $V(x)$ converges to the ultimate bound 
$\epsilon^2=1$ in all the three cases. 
Even though $u(t)$ and $V(x(t))$ are discrete-time signals, for ease of visualization, we plot them as continuous-time signals.

Next, we consider $100$ initial conditions uniformly sampled from a sphere with a specific radius and we calculate the average inter-event time (AIET) and 
the minimum inter-event time (MIET) over $100$ events for each initial 
condition with $N=30$, and $p=3$. These observations are tabulated in Table~\ref{tab:table1}.
\begin{table}[h!]
		\begin{center}
			\caption{Average of AIET and minimum of MIET, over a set of initial 
			conditions, for ETPC-CLF, ETPC-emulation and ETC-ZOH with $N=30$ and $p=3$.}
			\label{tab:table1}
			\begin{tabular}{|c|c|c|} % <-- Alignments: 1st column left, 2nd middle and 3rd right, with vertical lines in between
				\hline
			  & 
			 \textbf{Average of AIET} & 
			 \textbf{Minimum of MIET}
			\\
				\hline
			\textbf{ETPC-CLF} & 35.2348	 & 32	\\ \hline
				\textbf{ETPC-emulation} & 25.7287	& 25	\\ \hline
					\textbf{ETC-ZOH} & 9.2121	 & 2	\\ \hline
						\end{tabular}
		\end{center}
	\end{table}
Note that,
for the given choice of control law and the event-triggering rule, the proposed
ETPC-CLF method performs better, in terms of the AIET and
MIET, compared to the ETC-ZOH method and ETPC-emulation based method. Note also that, in the ETPC-CLF method, both the AIET and the MIET are greater than $N$. This shows that the proposed method performs better, in terms of the AIET and
MIET, compared to the event-triggered model predictive control (ET-MPC) method in which the maximum inter-event time is typically chosen as the prediction horizon length $N$.

We 
repeat the procedure for the proposed ETPC-CLF method for different values of 
$N$ and $p$, and the observations are tabulated in 
Table~\ref{tab:table2}. In Table~\ref{tab:table2}, we can see that 
there is an increasing trend in the values of AIET and MIET as 
$N$ or $p$ increases. Note that as $N$ increases, the finite horizon length of the optimization problem~\eqref{eq:a_k} increases and hence leads to larger inter-event time. This is an advantage compared to the ETPC-emulation method proposed in~\cite{AR-PT:2023} where there is a decreasing trend in inter-event times as $N$ increases. Note also that choosing a larger p helps to choose a control input from a larger input space and hence
leads to better performance.

\begin{table}[h!]
	\begin{center}
		\caption{Average of AIET and minimum of MIET, over a set of initial conditions, for ETPC-CLF for 
				different values of $N$ and $p$.}
		\label{tab:table2}
		\begin{tabular}{|c|c|c|c|c|c|c|} 
			\hline
			& \multicolumn{6}{c}{N}\vline\\
			\hline
			& \multicolumn{2}{c}{10} \vline& \multicolumn{2}{c}{20} \vline& \multicolumn{2}{c}{30}\vline\\
			\hline
			p & \textbf{AIET} & \textbf{MIET} & \textbf{AIET} & \textbf{MIET} & \textbf{AIET} &\textbf{MIET} \\
			\hline
			2 & 14.1268 & 13 & 23.6609 & 23 & 33.1982 & 31\\
			3 & 15.2476 & 15 & 24.6605 & 23 & 35.2348 & 32\\
			4 &  16.0283&16  & 25.3624 & 25 & 36.62655 & 33\\
			\hline
		\end{tabular}
	\end{center}
\end{table}

\section{CONCLUSION}\label{sec:conclusion}

In this paper, we proposed an event-triggered parameterized control 
method using a control Lyapunov function approach for discrete time linear systems with external disturbances.
We designed a parameterized control law and an event-triggering rule 
that guarantee global uniform ultimate boundedness of the trajectories 
of the closed loop system and non-trivial inter-event times. 
We illustrated our 
results through numerical examples. We also showed that, for the given choice of control law and event-triggering rule, the proposed control method performs better in terms of the AIET and the MIET compared to other existing methods such as emulation based ETPC, ZOH based ETC and event-triggered MPC.

\bibliographystyle{IEEEtran}
\bibliography{references}
\end{document}